\newtheorem{example}{Example}[section]
\newtheorem{remark}{Remark}[section]
\newtheorem{theorem}[example]{Theorem}
\newtheorem{definition}[example]{Definition}
\newtheorem{proposition}[example]{Proposition}
\newtheorem{lemma}[example]{Lemma}
\newenvironment{proof}{\begin{Proof}}{\qed\end{Proof}}
\def\hs{\hbox to 3mm{}}
\def\hhs{\hbox to 5cm{}}
\def\NN{\mathbb{N}}
\def\ss{\smallskip}
\def\H{\mathcal{H}}
\def\MQS{\mathbf{MQSym}}
\def\wmat{\mathrm{WMat}}
\def\qed{$\hfill\square$}
\title{\Large\bf\sc A word Hopf algebra based on the selection/quotient principle}
\author{\textsc{G\'erard H.E. Duchamp$^{1}$\footnote{gerard.duchamp@lipn.univ-paris13.fr}, Nguyen Hoang-Nghia$^{1}$
\footnote{nguyen.hoang@lipn.univ-paris13.fr}, 
Adrian Tanasa$^{1,2}$\footnote{adrian.tanasa@ens-lyon.org}}\\[2ex]  
}
\date{}
\begin{document}
\parindent=0pt
\maketitle

\begin{abstract}
\centering
    \begin{minipage}{0.8\textwidth}              
In this paper, we define a Hopf algebra structure on the vector space spanned by packed words using a selection/quotient coproduct. 
We show that this algebra is free on its irreducible packed words. 
We also construct the Hilbert series of this Hopf algebra and we investigate its primitive elements.
    \end{minipage}\let\thefootnote\relax\footnotetext{\textit{Date:} \today}
\end{abstract}

\newpage

\section{Introduction}

In algebraic combinatorics, one often associates algebraic structures with various sets of combinatorial objects. Such structures are, for example, Hopf algebras on trees, graphs, tableaux, matroids, words, etc

\medskip
A first type of combinatorial Hopf algebra structure is constructed using the selection/quotient principle. This simply means that the comultiplication is of the form
\begin{equation}\label{eq:type1} \Delta(S) = \sum_{\substack{A \subseteq S\\ +\, Conditions}} S[A] \otimes S/_A ,\end{equation} 
where $S[A]$ is a substructure of $S$ and $S/_A$ is a quotient.

Examples of such Hopf algebras are the Connes-Kreimer Hopf algebra of Feynman graphs, underlying the combinatorics of perturbative renormalization in quantum field theory \cite{Con-Kre2} or in non-commutative Moyal quantum field theory \cite{tanasa-kreimer}, \cite{tanasa-vignes-tourneret} (the interested reader may also refer to \cite{tanasa2}, \cite{Tan12} for some short reviews on these algebras). For the sake of completeness, let us also mention that similar Hopf algebraic structures have been proposed \cite{Markopoulou}, \cite{tanasa3}  for quantum gravity spin-foam models.

\medskip
Matroid theory has been introduced by Whitney in \cite{whitney}. A structure of matroid Hopf algebra is defined in \cite{schmi}, where the product is given by the direct sum operation and the coproduct is given by the selection/quotient principle mentioned above. 

\medskip
A second type of combinatorial Hopf algebra structure relies on the selection/complement principle. This means that the comultiplication is of the form
\begin{equation}\label{eq:type2}
\Delta(S) = \sum_{\substack{A \subseteq S\\ +\, Conditions}} S[A] \otimes [S-A].
\end{equation} 

\medskip
Examples of such Hopf algebras are the Loday-Ronco Hopf algebra of planar binary trees \cite{Lod-Ron2} or the Hopf algebra of matrix quasi-symmetric functions $\MQS$ \cite{NCSFVI}.

\medskip
It would thus be interesting to find Hopf algebraic structures on words with a comultiplication not of type \eqref{eq:type2}, but of type \eqref{eq:type1}. This is the issue we address in this paper.

\medskip
In this article, we introduce a new Hopf algebraic structure, that we call $\wmat$, on the set of packed words with product given by the shifted concatenation and coproduct given by such a selection/quotient principle. 
Each letter $x_j, j\geq 1$ can be seen 
as the infinite column vector with $1$ at $j^{th}$ place and other entries zero. $\wmat$ has the zero column, $x_0$, as a special element. 
In graph theory, it would correspond to the self-loop. Using the notion of irreducible element, we prove that $\wmat$ is free as an algebra. 



\section{Algebra structure}
\subsection{Definitions}

Let $X$ be an infinite totally ordered alphabet $\{x_i\}_{i \geq 0}$ and $X^*$ be the set of words with letters 
in the alphabet $X$. 

A word $w$ of length $n=|w|$ is a mapping $i\mapsto w[i]$ from $[1..|w|]$ to $X$. For a letter $x_i\in X$, the partial degree $|w|_{x_i}$ is the number of times the letter $x_i$ occurs in the word $w$. One has:
\begin{equation}
	|w|_{x_i}\ =\sum_{j=1}^{|w|} \delta_{w[j],x_i}.
\end{equation}
For a word $w\in X^*$, one defines the alphabet $Alph(w)$ as the set of its letters, while $IAlph(w)$ the set of indices in $Alph(w)$.
\begin{equation}
Alph(w)=\{x_i|\ |w|_{x_i}\not=0\}\ ;\ IAlph(w)=\{i\in \NN|\ |w|_{x_i}\not=0\}.
\end{equation}
The upper bound $sup(w)$ is the supremum of $IAlph(w)$, i. e. 
\begin{equation}
sup(w) = sup_{\,\mathbb{N}} (IAlph(w)).
\end{equation}
Note that $sup(1_{X^*}) = 0$. 

\ss
Let us define the substitution operators. Let $w=x_{i_1} \dots x_{i_m}$ and $\phi : IAlph(w) \longrightarrow \mathbb{N}$, with $\phi(0)=0$. One then has:
\begin{equation}\label{eq:subs}
S_\phi(x_{i_1} \dots x_{i_m}) = x_{\phi(i_1)} \dots x_{\phi(i_m)}.
\end{equation} 

Let us define the pack operator of a word $w$. Let $\{j_1 , \dots ,j_k \} = IAlph(w)\setminus \{0\}$ with $j_1 < j_2 < \dots < j_k$ and define
$\phi_w$ as 
\begin{equation}\label{eq:packedword}
\phi_w(i) = \begin{cases} m \mbox{ if } i = j_m \\ 0 \mbox{ if } i = 0 \end{cases}.
\end{equation}                                                 

The corresponding packed word, denoted by $pack(w)$, is $S_{\phi_w}(w)$. A word $w \in X^*$  is said to be \textit{packed} if $w = pack(w)$.

\begin{example}
Let $w = x_1 x_1 x_5 x_0 x_4$ . One then has $pack(w) = x_1 x_1 x_3 x_0 x_2$.
\end{example}

\begin{remark}
The presence of the letter $x_0$ dramatically influences the 
picture since one has an infinite number of distinct packed words of weight $m$ (the weight is, here, the sum of the indices), which are obtained by
adding multiple copies of the letter $x_0$.
\end{remark}

\begin{example}
The packed words of weight 2 are $x_0^{k_1} x_1 x_0^{k_2} x_1 x_0^{k_3}$, with $k_1, k_2, k_3 \geq 0$.
\end{example}

The operator $pack: X^* \longrightarrow X^*$ is idempotent ($pack \circ pack = pack$). It defines, by linear extension, a projector. The image, $pack(X^*)$, is the set of packed words.

\vskip.5cm
Let $u,v$ be two words; one defines the shifted concatenation $*$ by \begin{equation}u*v = uT_{sup(u)}(v),\end{equation} where, for $t \in \NN$, $T_t(w)$ denotes the image of $w$ by $S_\phi$ for $\phi(n) = n+t$ if $n>0$ and $\phi(0) = 0$ (in general, all letters can be reindexed except $x_0$). It is straightforward to check that, in the case the words are packed, the result of a shifted concatenation is a packed word.

\begin{definition}
Let $k$ be a field. One defines a vector space $\mathcal{H} = span_{k}(pack(X^*))$. One can endow this space with a product (on the words) given by 
 \begin{align*}\mu : &\mathcal{H} \otimes \mathcal{H} \longrightarrow \mathcal{H}, \\ & u \otimes v \longmapsto u*v.\end{align*}
\end{definition}

\begin{remark}
The product above is similar to the shifted concatenation for permutations (see \cite{NCSFVI}). 
Moreover, if $u,v$ are two words in $X^*$, then $sup(u*v) = sup(u) + sup(v)$. 
\end{remark}

\begin{proposition}
$(\mathcal{H},\mu,1_{X^*})$ is an associative algebra with unit (AAU).
\end{proposition}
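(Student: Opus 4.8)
The plan is to verify the three defining properties of an associative unital algebra for the bilinear map $\mu$: that it is well defined as a map $\mathcal{H} \otimes \mathcal{H} \to \mathcal{H}$, that it is associative, and that $1_{X^*}$ is a two-sided unit. Since the packed words form a basis of $\mathcal{H}$ and $\mu$ is obtained by bilinear extension, it suffices to check associativity and the unit laws on packed words $u,v,w$. Well-definedness has already been observed in the text: the shifted concatenation of two packed words is again packed, because if $u$ is packed its nonzero indices are exactly $\{1,\dots,sup(u)\}$, while those of $T_{sup(u)}(v)$ are exactly $\{sup(u)+1,\dots,sup(u)+sup(v)\}$, so their union is the contiguous block $\{1,\dots,sup(u)+sup(v)\}$; hence $\mu$ indeed lands in $\mathcal{H}$.

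The main work, and the main obstacle, lies in associativity, i.e. $(u*v)*w = u*(v*w)$. The strategy is to isolate two elementary properties of the reindexing operators $T_t=S_\phi$ (with $\phi(n)=n+t$ for $n>0$ and $\phi(0)=0$). First, each $T_t$ is a morphism for concatenation, $T_t(ab)=T_t(a)T_t(b)$, since $S_\phi$ acts letter by letter. Second, these operators compose additively, $T_s\circ T_t = T_{s+t}$, which I would check on a single letter ($x_0$ is fixed by both sides, and for $n>0$ one has $x_n \mapsto x_{n+t}\mapsto x_{n+t+s}$). With these in hand, and using the identity $sup(u*v)=sup(u)+sup(v)$ from the preceding remark, I would expand both sides:
\[
(u*v)*w = u\,T_{sup(u)}(v)\,T_{sup(u*v)}(w) = u\,T_{sup(u)}(v)\,T_{sup(u)+sup(v)}(w),
\]
while
\[
u*(v*w) = u\,T_{sup(u)}\big(v\,T_{sup(v)}(w)\big) = u\,T_{sup(u)}(v)\,T_{sup(u)}\big(T_{sup(v)}(w)\big) = u\,T_{sup(u)}(v)\,T_{sup(u)+sup(v)}(w),
\]
where the second equality uses the morphism property and the third uses the composition law. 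The two expressions coincide, which gives associativity.

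The unit law is then immediate. Since $sup(1_{X^*})=0$ and $T_0$ is the identity on words, $1_{X^*}*v = 1_{X^*}\,T_0(v) = v$; and since $T_{sup(u)}(1_{X^*})=1_{X^*}$, one gets $u*1_{X^*} = u\,1_{X^*} = u$, so $1_{X^*}$ is a two-sided unit. The only delicate point in the whole argument is the index bookkeeping underlying the composition law $T_s\circ T_t=T_{s+t}$ together with the $sup$ formula; once these are pinned down, associativity reduces to a direct substitution and no genuine obstacle remains.
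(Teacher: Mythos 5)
Your proof is correct and follows essentially the same route as the paper: both reduce associativity to the expansion $(u*v)*w = u\,T_{sup(u)}(v)\,T_{sup(u)+sup(v)}(w) = u*(v*w)$ using $sup(u*v)=sup(u)+sup(v)$, and verify the unit laws via $T_{sup(u)}(1_{X^*})=1_{X^*}$ and $T_0=\mathrm{id}$. The only difference is that you make explicit the two auxiliary facts ($T_t$ is a concatenation morphism and $T_s\circ T_t=T_{s+t}$) and the closure of packed words under $*$, which the paper uses implicitly or defers to a prior remark.
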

\begin{proof}

Let $u,v,w$ be three words in $\mathcal{H}$. One then has:
\begin{align}
(u*v)*w  = (uT_{sup(u)}(v))T_{sup(u*v)}(w)  = u(T_{sup(u)}(v)(T_{sup(u) +sup(v)})(w)) = u* (v*w).
\end{align}

Thus, $(\mathcal{H},\mu)$ is associative. On the other hand, for all $u \in pack(X^*)$, one has:

\[u*1_{X^*}  = uT_{sup(u)}(1_{X^*}) = u1_{X^*} = u\ ,\]

\[1_{X^*}*u  = (1_{X^*})T_{sup(1_{X^*})}(u)  = (1_{X^*})u = u\ .\]

Now remark that $pack(1_{X^*})=1_{X^*}$. This is clear from the fact that $1_{X^*} = 1_\mathcal{H}$.

One concludes that $(\mathcal{H},\mu,1_{X^*})$ is an AAU.
\end{proof}

As already announced in the introduction, we call this algebra $\wmat$.

\ss
\begin{remark}
The product is non-commutative, for example: $x_1*x_1x_1 \neq x_1x_1*x_1$.
\end{remark}

Let $w = x_{k_1} \dots x_{k_n}$ be a word and $I \subseteq [1 \dots n]$. A sub-word $w[I]$ is defined as $x_{k_{i_1}} \dots x_{k_{i_l}}$, where
$i_j \in I$.

\ss
\begin{lemma}\label{lm:subword}Let $u,v$ be two words. Let $I \subset [1\dots |u|]$ and $J\subset [|u|+1\dots |u|+|v|]$. One then has \begin{equation}\label{eq:mor1}pack(u*v[I+J]) = pack(u[I])*pack(v[J']),\end{equation} where $J'$ is the set $\{i-|u|\}_{i\in J}$.
\end{lemma}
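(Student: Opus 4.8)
The plan is to unfold the shifted concatenation and reduce the claim to the fact that packing acts blockwise when the two blocks occupy disjoint, order-separated ranges of indices. First I would rewrite the left-hand side. Since $u*v = uT_{sup(u)}(v)$ has its first $|u|$ positions carrying $u$ and its last $|v|$ positions carrying $T_{sup(u)}(v)$, and since letterwise substitution commutes with sub-word extraction, the positions of $I$ select from the first block (where $u*v$ coincides with $u$) while the positions of $J$ select from the second block, which corresponds to $J'=\{i-|u|\}_{i\in J}$ inside $v$. This gives
\begin{equation}
(u*v)[I+J] = u[I]\,T_{sup(u)}\bigl(v[J']\bigr).
\end{equation}
Setting $a=u[I]$ and $b=v[J']$, the lemma becomes $pack\bigl(a\,T_{sup(u)}(b)\bigr) = pack(a)*pack(b)$.

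The key structural observation is a separation of indices. Since $a$ is a sub-word of $u$ we have $IAlph(a)\subseteq IAlph(u)$, hence every nonzero letter of $a$ has index at most $sup(u)$. On the other hand, by definition of $T_{sup(u)}$, every nonzero letter of $T_{sup(u)}(b)$ has index strictly greater than $sup(u)$, while the positions carrying $x_0$ are left untouched. Thus in the concatenated word $a\,T_{sup(u)}(b)$ every nonzero index coming from the $a$-block is strictly smaller than every nonzero index coming from the $b$-block.

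Next I would apply the definition of $pack$. Let $k_a$ (resp. $k_b$) be the number of distinct nonzero letters of $a$ (resp. $b$). The order-preserving relabelling $\phi_{a\,T_{sup(u)}(b)}$ sends the distinct nonzero indices of the whole word onto $\{1,\dots,k_a+k_b\}$; by the separation above it sends those of the $a$-block exactly onto $\{1,\dots,k_a\}$ and those of the $T_{sup(u)}(b)$-block onto $\{k_a+1,\dots,k_a+k_b\}$, fixing every $x_0$. Restricted to the first block this is precisely the relabelling defining $pack(a)$, and restricted to the second block it is the relabelling defining $pack(b)$ followed by the shift $T_{k_a}$. Hence
\begin{equation}
pack\bigl(a\,T_{sup(u)}(b)\bigr) = pack(a)\,T_{k_a}\bigl(pack(b)\bigr).
\end{equation}
Finally, since $k_a$ is the number of distinct nonzero letters of $a$, we have $k_a = sup(pack(a))$, so the right-hand side equals $pack(a)*pack(b)$, which is the claim.

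I expect the main obstacle to be making the blockwise action of $pack$ fully rigorous, in particular handling cleanly the letter $x_0$ (which is never relabelled and may occur in both blocks) and the degenerate cases where $a$ or $b$ contains no nonzero letter, so that $k_a=0$ or $k_b=0$. These cases remain consistent with the conventions $sup(1_{X^*})=0$ and $T_0=\mathrm{id}$, but they should be verified explicitly so that the identification $k_a=sup(pack(a))$ holds without exception.
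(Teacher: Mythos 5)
Your proof is correct and follows essentially the same route as the paper: both unfold the shifted concatenation to get $(u*v)[I+J] = u[I]\,T_{sup(u)}(v[J'])$ and then exploit the order-separation of nonzero indices between the two blocks to make $pack$ factor across them. Your explicit blockwise analysis of the relabelling $\phi_{a\,T_{sup(u)}(b)}$ (including the $x_0$ and $k_a=0$ edge cases) is simply a more detailed rendering of the paper's terser step, which replaces $T_{sup(u)}$ by $T_{sup(u[I])}$ using $sup(u[I])\leq sup(u)$ and then reads off the factorization.
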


\begin{proof}
By direct computation, one has:

\begin{equation}\label{eq:prod}
 pack(u*v[I+J]) = pack(uT_{sup(u)}(v)[I+J]) = pack(u[I]T_{sup(u)}(v)[J]). 
\end{equation}

One further has $sup(u[I])\leq sup(u)$, and this, together with \eqref{eq:prod} leads to:
\begin{align}
pack(u[I]T_{sup(u)}(v)[J]) =  pack(u[I]T_{sup(u[I])}(v[J'])) = pack(u[I])*pack(v[J']). 
\end{align}
\end{proof}

\medskip
\begin{theorem}
Let $k<X>$ be equipped with the shifted concatenation. The mapping pack \begin{equation}
k<X> \xrightarrow{pack} \mathcal{H}
\end{equation} is then a morphism AAU.
\end{theorem}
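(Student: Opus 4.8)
The plan is to verify the three defining properties of a morphism of AAU: $k$-linearity, preservation of the unit, and compatibility with the two products. Linearity is immediate, since $pack$ is a map on words that is then extended by linearity to all of $k\langle X\rangle$ — this is exactly the projector obtained by linear extension discussed right after the definition of the $pack$ operator. Preservation of the unit also costs nothing: the unit of $k\langle X\rangle$ is the empty word $1_{X^*}$, whose index alphabet is empty, so $pack(1_{X^*}) = 1_{X^*}$, and this is the unit of $\mathcal{H}$; the identity $pack(1_{X^*}) = 1_{X^*}$ was in fact already recorded in the proof of the preceding proposition.

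The heart of the statement is multiplicativity, namely that for any two words $u,v \in X^*$,
\begin{equation}
pack(u*v) = pack(u)*pack(v),
\end{equation}
after which the general case follows by bilinearity. The key observation is that this identity is precisely the content of Lemma \ref{lm:subword} specialized to the full index sets $I = [1\dots|u|]$ and $J = [|u|+1\dots|u|+|v|]$. With these choices one has $I+J = [1\dots|u|+|v|]$, hence $(u*v)[I+J] = u*v$; moreover $u[I] = u$, and since $J' = \{i-|u|\}_{i\in J} = [1\dots|v|]$ one also has $v[J'] = v$. Substituting into \eqref{eq:mor1} yields the desired equality directly.

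The delicate point is therefore not the computation but the scope of the cited lemma: its hypotheses are written with $\subset$, and one must read this as $\subseteq$ so that the full index sets are admissible; alternatively one simply re-runs the one-line computation in the lemma's proof with $I,J$ full, the only ingredient being $sup(u[I]) \leq sup(u)$, which is here an equality. I would also make explicit at the outset that $(k\langle X\rangle, *, 1_{X^*})$ is itself an AAU, so that the phrase ``morphism of AAU'' is meaningful: associativity of $*$ on arbitrary (not necessarily packed) words follows from the very same computation as in the proof of the preceding proposition, using $sup(u*v) = sup(u)+sup(v)$, and $1_{X^*}$ is a two-sided unit by the same two one-line checks. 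Granting this, the three verifications above complete the proof.
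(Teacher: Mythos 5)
Your proof is correct and takes essentially the same approach as the paper: the paper's entire proof consists of applying Lemma \ref{lm:subword} with $I=[1\dots |u|]$ and $J=[|u|+1\dots |u|+|v|]$, exactly as you do. Your additional verifications (linearity, unit preservation, and that $k\langle X\rangle$ with shifted concatenation is itself an AAU) only make explicit what the paper leaves implicit.
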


\begin{proof}
By using the Lemma \ref{lm:subword} and taking $I=[1 \dots |u|]$ and $J = [|u|+1 \dots |u|+|v|]$, one gets the conclusion.
\end{proof}

\subsection{$\wmat$ is a free algebra}\label{sec:freealg}

$\wmat$ is, by construction, the algebra of the monoid $pack(X^*)$, therefore to check that $\wmat$ is a free algebra, it is sufficient to show that $pack(X^*)$ is a free monoid on its letters. In the following diagram, a free monoid is a pair $(F(X),j_X)$ where $F(X)$ is a monoid, $j_X : X \rightarrow F(X)$ is a mapping such that $(\forall M \in Mon)$ $(\forall f : X \rightarrow M)$ $(\exists ! f_X \in Mor(F(X),M))$ $f = f_x \circ j_X$.

\begin{center}
\begin{tikzpicture}
\node (set) at (0,0) {Set};
\node (monoids) [right = 3cm of set] {Monoids};
\node (X) [below=1cm of set] {$X$};
\node (M) [below=1cm of monoids] {$\mathcal{M}$};
\node (F) [below=2.5cm of monoids] {$F(X)$};
\draw (5,-.5) to (-2,-.5);
\draw (1.5,1) to (1.5,-4);
\draw[->] (X) to node [above] {$f$} (M);
\draw[->] (X) to node[below] {$j_X$} (F);
\draw[->,dotted] (F) to node [right]{$f_X$} (M);
\end{tikzpicture}
\end{center}

Here we will use an ``internal" characterization of free monoids in terms of irreducible elements.

\begin{definition}\label{def:irrword}
A packed word $w$ in $pack(X^*)$ is called an irreducible word 
if and only if it cannot be written under the form $w = u*v$, 
 where $u$ and $v$ are two non trivial packed words.
\end{definition}

\begin{example}
The word $x_1x_1x_1$ is an irreducible word. The word $x_1x_1x_2$ is a reducible word because 
it can be written as $x_1x_1x_2 = x_1x_1 *x_1$.
\end{example}

\begin{proposition}
If $w$ is a packed word, then $w$ can be written uniquely as 
$w = v_1*v_2*\dots *v_n$, where $v_i$, $1\leq i \leq n$, are non-trivial irreducible words.
\end{proposition}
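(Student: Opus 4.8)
The plan is to prove existence and uniqueness separately, exploiting throughout the explicit shape $u*v = u\,T_{sup(u)}(v)$: this shows that $u$ occurs as a \emph{literal} prefix of $u*v$ and that lengths add, $|u*v|=|u|+|v|$. Existence is then immediate by induction on $|w|$: if $w$ is trivial or irreducible there is nothing to prove, and otherwise $w=u*v$ with $u,v$ non-trivial (Definition \ref{def:irrword}), whence $|u|,|v|<|w|$ and the induction hypothesis factors $u$ and $v$, the two factorizations combining through associativity of $*$.

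For uniqueness the main tool will be a description of the positions at which $w$ can be split. I would show that, for a packed word $w$ and $0\le p\le |w|$, one has $w = w[1..p]*pack(w[p+1..|w|])$ with both factors packed if and only if $\max_{\neq 0}\big(w[1..p]\big) < \min_{\neq 0}\big(w[p+1..|w|]\big)$, with the conventions that the maximum over an all-zero (or empty) block is $0$ and the minimum over such a block is $+\infty$. The forward implication is clear, since $T_{sup(u)}$ sends every non-zero letter of $v$ strictly above $sup(u)=\max_{\neq 0}(u)$. The crucial feature of this criterion is its monotonicity under shortening the suffix: if it holds at $p$ for $w$, it holds at $p$ for every literal prefix $w[1..q]$ with $q\ge p$, because passing to a sub-block can only raise the minimum non-zero letter of the suffix.

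Uniqueness will then follow by induction on the number of factors, together with left-cancellativity of $*$ — from $a*b=a*c$ one gets $T_{sup(a)}(b)=T_{sup(a)}(c)$ by cancellation in $X^*$, hence $b=c$ since $T_{sup(a)}$ is injective. Given two factorizations $v_1*\cdots*v_n = v_1'*\cdots*v_m'$ into non-trivial irreducibles, $v_1$ and $v_1'$ are both literal prefixes of the common word, so one is a prefix of the other; assume $|v_1|\le|v_1'|$. As $|v_1|$ is a splitting position of the whole word, monotonicity makes it a splitting position of $v_1'$ as well, i.e. $v_1'=v_1*t$ for some packed $t$; irreducibility of $v_1'$ and non-triviality of $v_1$ force $t$ to be trivial, so $v_1=v_1'$. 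Cancelling on the left and applying the induction hypothesis then yields $n=m$ and $v_i=v_i'$ throughout.

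The step I expect to be delicate is the converse direction of the splitting criterion, where the unconstrained placement of the letter $x_0$ must be controlled: the $x_0$'s may fall on either side of a cut and play no role in the separation inequality, so one has to verify that the bounded prefix is genuinely \emph{packed}, i.e. that it skips no value below its maximum — this being forced precisely because any omitted value would otherwise have to appear, too small, in the suffix. Once this bookkeeping is settled, the prefix-monotonicity argument and the left-cancellation reduce everything to routine verification.
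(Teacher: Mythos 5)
Your proof is correct and takes essentially the same route as the paper's: the paper's \emph{admissible cut} condition, $sup(w[1\dots i]) = inf(w[i+1\dots |w|])-1$ or $sup(w[i+1\dots |w|])=0$, is exactly your splitting criterion (your $\min_{\neq 0}$ convention in fact repairs the paper's use of $inf$, which as literally stated fails when $x_0$ occurs in a suffix alongside nonzero letters, e.g.\ $w=x_1x_0x_2$ does split at position $1$), and uniqueness is obtained by the same comparison at the first discrepant factor. Your writeup is the more complete of the two, since the paper asserts without proof both the equivalence between admissible cuts and genuine factorization positions (your ``delicate'' packedness-of-the-prefix verification) and the left-cancellation needed to conclude $v_i=v_i'$ for all $i$.
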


\begin{proof}
The $i^{th}$ position of word $w$ is called an admissible cut if $sup(w[1\dots i]) = inf(w[i+1 \dots |w|]) - 1$ or $sup(w[i+1 \dots |w|]) = 0$, where $inf(w)$ is infimum of $IAlph(w)$. 

Because the length of word is finite, one can get $w = v_1* v_2* \dots v_n$, 
with $n$ maximal and $v_i$ non trivial, $\forall 1\leq i \leq n$.

One assumes that one word can be written in two ways 
\begin{equation}\label{eq:irre1} w = v_1* v_2* \cdots * v_n\end{equation} 
and 
\begin{equation}\label{eq:irre2} w = v'_1* v'_2* \cdots * v'_m.\end{equation}

Denoting by $k$ the first number such that $v_k \neq v'_k$, without loss of generality, one can suppose that $|v_k|<|v'_k|$. From equation \eqref{eq:irre1}, the $k^{th}$ position is an admissible cut of $w$. From equation \eqref{eq:irre2}, the $k^{th}$ position is not an admissible cut of $w$. One thus has a contradiction. One has $n=m$ and $v_i=v'_i$ for all $1\leq i \leq n$.
\end{proof}

\medskip
One can thus conclude that $pack(X^*)$ is free as monoid with the packed words as a basis.



\section{Bialgebra structure}
  
Let us give the definition of the coproduct and prove that the coassociativity property holds.

\begin{definition}
Let $A \subset X$, one defines $w/A = S_{\phi_A}(w)$ with ${\phi_A}(i) = \begin{cases} i \mbox{ if } x_i \not\in A,\\ 0 \mbox{ if } x_i \in A\end{cases}.$

Let u be a word. One defines $^w/_u = ^w/_{Alph(u)}$.
\end{definition}

\begin{definition}\label{def:coprod}
The coproduct of $\H$ is given by \begin{equation}\label{eq:coprod}\Delta(w) = \sum_{I+J=[1\dots |w|]}pack(w[I]) \otimes pack(w[J]/_{w[I]}), \forall w \in \H, \end{equation} where this sum runs over all partitions of $[1 \dots |w|]$ divided into two blocks, $I \cup J = [1 \dots |w|]$ and $I \cap J =\emptyset$.
\end{definition}

\begin{example}
One has:
\begin{equation*}
\Delta(x_1x_2x_1) = x_1x_2x_1 \otimes 1_{X^\ast} + x_1 \otimes x_1x_0 + x_1 \otimes x_1^2 + x_1 \otimes x_0x_1 + x_1x_2 \otimes x_0 + x_1^2 \otimes x_1 + x_2x_1 \otimes x_0 + 1_{X^\ast}\otimes x_1x_2x_1.
\end{equation*}
\end{example}

\ss
Let us now prove the coassociativity.

\ss
Let $I=[i_1,\dots,i_n]$, and $I_1 =[j_1,\dots,j_k] \subseteq [1\dots |I|]$. Let $\alpha$ be a mapping: \begin{align}
\alpha:  I & \longrightarrow [1\dots n],\notag\\  i_s & \longmapsto s.
\end{align}

\begin{lemma}\label{lm:coprod1}
Let $w \in X^*$ be a word, $I$ be a subset of $[1\dots |w|]$ and $I_1 \subset [1 \dots |I|]$. 
One then has
 \begin{equation}\label{eq:coprod1} pack(w[I])[I_1] = S_{\phi_{w[I]}}(w[I'_1]),\end{equation} where $I'_1$ is $\alpha^{-1}(I_1)$ and $\phi_{w[I]}$ is the packing map of $w[I]$ that is given in \eqref{eq:packedword} .
\end{lemma}

\begin{proof}
Using the definition of packing map $\phi_{w[I]}$, one can directly check that equation \ref{eq:coprod1} holds.
\end{proof}

\ss
\begin{lemma}Let $w \in X^*$ be a word and $\phi$ be a strictly increasing map from $IAlph(w)$ to $\mathbb{N}$. One then has:
\begin{itemize}
\item[1)]  \begin{equation}pack(S_{\phi}(w)) = pack(w).\end{equation}
\item[2)] \begin{equation}\label{eq:quotionpack}S_{\phi}(^{w_1}/_{w_2})= ^{S_{\phi}(w_1)}/_{S_{\phi}(w_2)}.\end{equation}
\end{itemize}\label{lm:coprod2}
\end{lemma}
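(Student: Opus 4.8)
The plan is to reduce both statements to a single structural fact about substitution operators: they compose, in the sense that $S_\psi \circ S_\phi = S_{\psi\circ\phi}$ whenever the composite is defined and $\phi(0)=\psi(0)=0$. This is immediate from \eqref{eq:subs}, since $S_\phi$ acts letterwise by reindexing, and it is the engine behind both parts. First I would record the auxiliary identity $IAlph(S_\phi(w)) = \phi(IAlph(w))$, valid for any word $w$ and any substitution map $\phi$, which guarantees that the maps can indeed be chained and that $x_0$ is preserved thanks to $\phi(0)=0$.

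For part 1), the key observation is that a strictly increasing $\phi$ preserves the relative order of the distinct non-zero letters of $w$. Writing $IAlph(w)\setminus\{0\} = \{j_1 < \dots < j_k\}$, strict monotonicity gives $IAlph(S_\phi(w))\setminus\{0\} = \{\phi(j_1) < \dots < \phi(j_k)\}$ with the ranks unchanged. Hence the packing map of $S_\phi(w)$ satisfies $\phi_{S_\phi(w)}(\phi(j_m)) = m = \phi_w(j_m)$, and also $\phi_{S_\phi(w)}(\phi(0)) = 0 = \phi_w(0)$. By the composition law, $pack(S_\phi(w)) = S_{\phi_{S_\phi(w)}\circ\phi}(w) = S_{\phi_w}(w) = pack(w)$.

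For part 2), I would rewrite both sides as single substitution operators and compare their defining maps. Setting $A = Alph(w_2)$ and $B = Alph(S_\phi(w_2))$, the definition of the quotient gives ${}^{w_1}/_{w_2} = S_{\phi_A}(w_1)$ and ${}^{S_\phi(w_1)}/_{S_\phi(w_2)} = S_{\phi_B}(S_\phi(w_1))$. Applying the composition law, the desired identity \eqref{eq:quotionpack} becomes the pointwise equality $\phi\circ\phi_A = \phi_B\circ\phi$ on $IAlph(w_1)$. This is where injectivity of $\phi$ (a consequence of strict monotonicity) does the real work: since $IAlph(S_\phi(w_2)) = \phi(IAlph(w_2))$, injectivity yields the equivalence $\phi(i)\in IAlph(S_\phi(w_2)) \iff i\in IAlph(w_2)$, so that an index is killed on the right exactly when it is killed on the left. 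A short case split on whether $i\in IAlph(w_2)$ then verifies the equality, the letter $x_0$ being handled by $\phi(0)=0$.

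The only genuine subtlety, and the step I would be most careful about, is the role of the hypothesis on $\phi$: part 1) really needs order preservation (so that ranks are preserved under packing), whereas part 2) needs only injectivity (so that the quotient alphabet is tracked faithfully); strict monotonicity conveniently supplies both. The remaining verifications are routine once the composition law and the identity $IAlph(S_\phi(w)) = \phi(IAlph(w))$ are in place.
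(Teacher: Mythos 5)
Your proof is correct and takes essentially the same route as the paper's: both parts are reduced to the composition law $S_{\psi}\circ S_{\phi}=S_{\psi\circ\phi}$, with rank preservation under the strictly increasing $\phi$ giving $\phi_{S_\phi(w)}\circ\phi=\phi_w$ for part 1), and the pointwise case split establishing $\phi\circ\phi_{A}=\phi_{B}\circ\phi$ on $IAlph(w_1)$ for part 2), exactly as in the paper. If anything, your write-up is slightly cleaner: you make the identity $IAlph(S_\phi(w))=\phi(IAlph(w))$ explicit, correctly take $B=Alph(S_\phi(w_2))$ where the paper's proof contains the typo $I'_2=Alph(S_\phi(w_1))$, and correctly attribute the vanishing in the first case to $\phi(0)=0$ (and the second case to injectivity) rather than to monotonicity as the paper loosely states.
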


\begin{proof}

1) One has \begin{equation*}
pack(S_{\phi}(w))  = S_{\phi_0}(S_{\phi}(w))= S_{\phi_0\circ\phi}(w),
\end{equation*}
where $\phi_0$ is the packing map which is given in \eqref{eq:packedword}.

Note that both $\phi$ and $\phi_0$ are strictly increasing maps. 

Denote $I=IAlph(w) = \{j_1,j_2,\dots,j_k\}$, $j_1 < j_2 < \dots <j_k$, the image set $\phi(I) = \{j'_i,j'_i = \phi(j_i),i=1\cdots k \}$ one has $j'_1 < j'_2 < \dots < j'_k$. From the definition of $\phi_0$, one has: $\phi_0(j'_i) = i = \phi_0(j_i)$. This leads to:\begin{equation}
S_{\phi_0\circ\phi}(w) = S_{\phi_0}(w) = pack(w).
\end{equation}

2) Let $I_2 = Alph(w_2)$ and $I'_2 = Alph(S_{\phi}(w_1))$.

Let us rewrite the two sides of equation \eqref{eq:quotionpack}, the left hand side (LHS) and the right hand side (RHS):

\begin{equation}\label{eq:LHSquotionpack}
LHS = S_{\phi}(S_{\phi_{I_2}}(w_1)) = S_{\phi\circ\phi_{I_2}}(w_1),
\end{equation}

\begin{equation}\label{eq:RHSquotionpack}
RHS = S_{\phi_{I'_2}}(S_{\phi}(w_1)) = S_{\phi_{I'_2}\circ\phi}(w_1).
\end{equation}

With $x_i \in Alph(w_1)$, one has two cases:
\begin{itemize}
\item[1.] If $x_i \in I_2$, then $\phi_{I_2}(i)=0$ and $\phi\circ\phi_{I_2}(i)=0$ because $\phi$ is a strictly increasing map.

On the other hand, $\phi(i) \in I'_2$ and this implies $\phi_{I'_2}\circ\phi(i)=\phi_{I'_2}(\phi(i))=0$.
\item[2.] If $x_i \not\in I_2$, then $\phi_{I_2}(i)=i$ and $\phi\circ\phi_{I_2}(i)=\phi(i)$.

On the other hand, because $\phi$ is a strictly increasing map, then $\phi(i) \not\in I'_2$, and $\phi_{I'_2}\circ\phi(i) = \phi(i)$.
\end{itemize}

One thus has $\phi\circ\phi_{I_2}(i) = \phi_{I'_2}\circ\phi(i)$. Using this result and the two equations above \eqref{eq:LHSquotionpack} and \eqref{eq:RHSquotionpack}, we conclude the proof.
\end{proof}

\ss
\begin{lemma} Let $w$ be a word in $\mathcal{H}$, and $I,J,K$ be three disjoint subsets of $\{1 \dots |w|\}$. One then has: \begin{equation}
\frac{^{w[K]}/_{w[I]}}{^{w[J]}/_{w[I]}} = ^{w[K]}/_{w[I+J]}.
\end{equation}\label{lm:coprod3}
\end{lemma}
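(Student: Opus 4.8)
The plan is to observe that every operation occurring on both sides of the identity is a substitution operator $S_\phi$ (or a composition of such), acting letter by letter and preserving length. Since the quotient $^w/_u = S_{\phi_{Alph(u)}}(w)$ involves no packing and no deletion or reordering of positions, both $\frac{^{w[K]}/_{w[I]}}{^{w[J]}/_{w[I]}}$ and $^{w[K]}/_{w[I+J]}$ are words of length $|K|$, each obtained from $w[K]$ by replacing some of its letters with $x_0$. It therefore suffices to fix an arbitrary position $p\in\{1,\dots,|K|\}$, write $x_i$ for the letter of $w[K]$ at that position, and verify that the two sides send $x_i$ to the same letter.

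First I would record the right-hand side explicitly: by definition $^{w[K]}/_{w[I+J]} = S_{\phi_A}(w[K])$ with $A = Alph(w[I+J]) = Alph(w[I]) \cup Alph(w[J])$, so the letter at position $p$ becomes $x_0$ exactly when $x_i \in Alph(w[I]) \cup Alph(w[J])$, and is left unchanged otherwise. Next I would unwind the nested quotient on the left by setting $u = {}^{w[K]}/_{w[I]}$ and $v = {}^{w[J]}/_{w[I]}$, so that the left-hand side is $^u/_{Alph(v)}$. The decisive bookkeeping step is to identify $Alph(v)$: since $v$ is obtained from $w[J]$ by sending every letter of $Alph(w[I])$ to $x_0$ (and fixing $x_0$), its non-zero letters are precisely those of $Alph(w[J])$ outside $Alph(w[I])$, i.e. $Alph(v) \setminus \{x_0\} = Alph(w[J]) \setminus Alph(w[I])$.

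With this in hand I would trace $x_i$ through the two quotients in three cases. If $x_i \in Alph(w[I])$, the first quotient already turns it into $x_0$, which the second quotient fixes. If $x_i \notin Alph(w[I])$ but $x_i \in Alph(w[J])$, the first quotient leaves $x_i$ unchanged while the second sends it to $x_0$, since $x_i \in Alph(v)$. Finally, if $x_i$ lies in neither alphabet (including the case $x_i = x_0$), it survives both quotients unchanged. In all three cases the left-hand side produces $x_0$ precisely when $x_i \in Alph(w[I]) \cup Alph(w[J])$, matching the right-hand side position by position, which gives the identity.

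The only genuine obstacle is the careful computation of $Alph(v)$ together with the special status of $x_0$: one must check that $x_0$, fixed by every substitution operator because $\phi(0)=0$, never generates a spurious discrepancy, and that a letter of $w[K]$ is killed on the left exactly when its value actually occurs in $w[I]$ or in $w[J]$, and not merely because it coincides with a symbol $x_0$ created during the first quotient. Once the three cases are laid out this verification is routine; alternatively one could phrase the argument through Lemma~\ref{lm:coprod2}, but the direct letter-wise comparison is the most transparent.
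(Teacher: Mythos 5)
Your proof is correct, and it takes a genuinely more elementary route than the paper's. The paper argues by operator calculus: it writes each quotient as a substitution operator and invokes Lemma~\ref{lm:coprod2}(2) to obtain the chain
\begin{equation*}
\frac{{}^{w[K]}/_{w[I]}}{{}^{w[J]}/_{w[I]}} \;=\; {}^{S_{\phi_I}(w[K])}\big/_{S_{\phi_I}(w[J])} \;=\; S_{\phi_I}\left({}^{w[K]}/_{w[J]}\right) \;=\; S_{\phi_I\circ\phi_J}(w[K]) \;=\; {}^{w[K]}/_{w[I+J]},
\end{equation*}
using $S_{\phi_I}\circ S_{\phi_J}=S_{\phi_I\circ\phi_J}$ and the fact that killing $Alph(w[J])$ and then $Alph(w[I])$ kills $Alph(w[I])\cup Alph(w[J])$. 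You instead compare the two sides position by position on $w[K]$, with the bookkeeping identity for $Alph\left({}^{w[J]}/_{w[I]}\right)$ followed by a three-case analysis on the letter $x_i$. Besides being self-contained, your route is rigorous precisely where the paper's is loose: Lemma~\ref{lm:coprod2}(2) is stated only for \emph{strictly increasing} $\phi$, whereas the killing map $\phi_I$ in the paper's chain is not strictly increasing (it collapses all of $IAlph(w[I])$ to $0$), so the paper applies its lemma outside its stated hypotheses; the identity does still hold for killing maps, but checking that is exactly the letter-wise verification you carry out, including the observation that a freshly created $x_0$ is fixed by every subsequent $S_\phi$ since $\phi(0)=0$. (The paper's display also contains a typo, quotienting $S_{\phi_I}(w[K])$ by itself rather than by $S_{\phi_I}(w[J])$.) What the paper's approach buys in exchange is brevity and reuse, since the same operator identities recur in the coassociativity computation, while your argument is local to this lemma. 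One cosmetic point: your formula $Alph(v)\setminus\{x_0\}=Alph(w[J])\setminus Alph(w[I])$ should exclude $x_0$ from the right-hand side too when $x_0\in Alph(w[J])$, but as you note $x_0$ is fixed by every quotient, so this never affects the case analysis.
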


\begin{proof}

Using Lemma \ref{lm:coprod2}, one has: \begin{align}
\frac{^{w[K]}/_{w[I]}}{^{w[J]}/_{w[I]}} = ^{S_{\phi_I}(w[K])}/_{S_{\phi_I}(w[K])} = S_{\phi_I}(^{w[K]}/_{w[J]})  = S_{\phi_I}(S_{\phi_J}(w[K])) = S_{\phi_I \circ \phi_J}(w[K]) =  ^{w[K]}/_{w[I+J]}.
\end{align}
\end{proof}

\ss
\begin{proposition}
The vector space $\mathcal{H}$ endowed with the coproduct \eqref{eq:coprod} is a coassociative coalgebra with co-unit (c-AAU). The co-unit is given by: 
$$\epsilon(w) = \begin{cases} 1 \mbox{ if } w = 1_\H,\\0 \mbox{ otherwise}.  \end{cases}$$ 
\end{proposition}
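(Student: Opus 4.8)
The plan is to establish coassociativity by showing that both iterated coproducts, $(\Delta\otimes\mathrm{id})\circ\Delta$ and $(\mathrm{id}\otimes\Delta)\circ\Delta$, reduce to one and the same ``triple'' sum indexed by ordered partitions of the position set into three blocks. Concretely, I expect both sides to equal
$$\sum_{I+J+K=[1\dots|w|]} pack(w[I]) \otimes pack(w[J]/_{w[I]}) \otimes pack(w[K]/_{w[I+J]}),$$
the sum running over all ways of splitting $[1\dots|w|]$ into three pairwise disjoint blocks $I,J,K$. Once both composites are brought to this normal form, coassociativity is immediate. The counit axioms are then checked separately by a short direct computation.

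For $(\Delta\otimes\mathrm{id})\circ\Delta$ I would start from $\Delta(w)=\sum_{I+J}pack(w[I])\otimes pack(w[J]/_{w[I]})$ and apply $\Delta$ to the left factor. Splitting $[1\dots|I|]$ as $I_1+I_2$ and transporting these blocks back to subsets $A,B$ of $I$ via the map $\alpha$ of Lemma \ref{lm:coprod1}, I set $A+B=I$ and $C=J$, so $A+B+C=[1\dots|w|]$. The first tensor factor $pack(pack(w[I])[I_1])$ collapses to $pack(w[A])$ using Lemma \ref{lm:coprod1} to rewrite $pack(w[I])[I_1]=S_{\phi_{w[I]}}(w[A])$ and then part 1) of Lemma \ref{lm:coprod2} (invariance of $pack$ under a strictly increasing substitution). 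The middle factor $pack(pack(w[I])[I_2]/_{pack(w[I])[I_1]})$ is handled by first rewriting both subwords via Lemma \ref{lm:coprod1}, then pulling the common substitution $S_{\phi_{w[I]}}$ outside the quotient with part 2) of Lemma \ref{lm:coprod2}, and finally absorbing it with part 1); this yields $pack(w[B]/_{w[A]})$. The third factor is already $pack(w[C]/_{w[A+B]})$, so this composite produces exactly the triple sum.

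For $(\mathrm{id}\otimes\Delta)\circ\Delta$ I would instead apply $\Delta$ to the right factor $u:=w[J]/_{w[I]}$. Writing $A=I$ and splitting the positions of $u$ into blocks corresponding to $B,C\subseteq J$ with $B+C=J$, the key observation is that the quotient operator commutes with restriction to a subword, so that $u[J_1]=w[B]/_{w[A]}$ and $u[J_2]=w[C]/_{w[A]}$. The first two tensor factors then reduce to $pack(w[A])$ and $pack(w[B]/_{w[A]})$ exactly as above. The third factor $pack(pack(u)[J_2]/_{pack(u)[J_1]})$ requires, after applying Lemmas \ref{lm:coprod1} and \ref{lm:coprod2}, the identity ${}^{w[C]/_{w[A]}}/_{w[B]/_{w[A]}}={}^{w[C]}/_{w[A+B]}$, which is precisely Lemma \ref{lm:coprod3}; it collapses to $pack(w[C]/_{w[A+B]})$. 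Hence this composite equals the same triple sum, and coassociativity follows.

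The main obstacle is the repeated, careful bookkeeping of the index blocks through the two layers of packing: one must track how a subset of $[1\dots|I|]$ (or of $[1\dots|u|]$) pulls back to a subset of the original positions, and verify at each step that the hypotheses of the lemmas hold --- in particular that the substitution maps $\phi_{w[I]}$ and $\phi_u$ are strictly increasing, which is what makes Lemma \ref{lm:coprod2} available. The genuinely algebraic content is concentrated in the middle and last tensor factors, where part 2) of Lemma \ref{lm:coprod2} (commutation of substitution with the quotient) and Lemma \ref{lm:coprod3} (transitivity of iterated quotients) are what force the two normal forms to coincide. Finally, for the counit: since $pack(v)=1_{X^*}$ if and only if $v$ is the empty word, and since both restriction to a subword and the quotient preserve length, $\epsilon(pack(w[I]))$ is nonzero only for $I=\emptyset$ while $\epsilon(pack(w[J]/_{w[I]}))$ is nonzero only for $J=\emptyset$; substituting these into $\Delta(w)$ and using that $w$ is packed gives $(\epsilon\otimes\mathrm{id})\Delta(w)=w=(\mathrm{id}\otimes\epsilon)\Delta(w)$, which completes the proof.
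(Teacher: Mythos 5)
Your proposal is correct and follows essentially the same route as the paper: both iterated coproducts are reduced, via Lemma \ref{lm:coprod1}, both parts of Lemma \ref{lm:coprod2}, and Lemma \ref{lm:coprod3}, to the common triple sum $\sum_{I+J+K=[1\dots|w|]} pack(w[I]) \otimes pack({}^{w[J]}/_{w[I]}) \otimes pack({}^{w[K]}/_{w[I+J]})$, with the counit axioms verified by the same direct computation. Your explicit remarks on transporting index blocks through $\alpha$ and on checking that the packing substitutions are strictly increasing are points the paper leaves implicit, but they do not change the argument.
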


\begin{proof}

Let us first prove the coassociativity of the coproduct \eqref{eq:coprod}, namely \begin{equation}\label{eq:coass}
(\Delta \otimes Id)\circ \Delta(w) = (Id \otimes \Delta)\circ\Delta(w).
\end{equation}

The LHS of the coassociativity condition \eqref{eq:coass} can be written:

\begin{align}\label{eq:LHSassc}
& (\Delta \otimes Id)\circ \Delta(w) =  \sum_{I+J = [1\dots |w|]} \left(\sum_{I_1+I_2 = [1 \dots |I|]} pack(pack(w[I])[I_1]) \otimes  pack(^{pack(w[I][I_2])}/_{pack(w[I][I_1])})\right)\notag\\
  & \otimes pack(^{w[J]}/_{w[I]}) = \sum_{I+J = [1\dots |w|]} \left(\sum_{I'_1+I'_2 = I} pack(S_{\phi}(w[I'_1])) \otimes  pack(^{S_{\phi}(w[I'_2])}/_{S_{\phi}(w[I'_1])}) \right)\notag\\ & \otimes pack(^{w[J]}/_{w[I]})
 =  \sum_{I'_1+I'_2+J = [1\dots |w|]} pack(w[I'_1]) \otimes pack(^{w[I_2]}/_{w[I'_1]})  \otimes pack(^{w[J]}/_{w[I_1+I_2]}).
\end{align}

The RHS of the coassociativity condition \eqref{eq:coass} can be written as:

\begin{align}\label{eq:RHSassc}
& (Id \otimes \Delta)\circ \Delta(w) = 
\sum_{I+J=[1\dots |w|]} pack(w[I])\otimes \left(\sum_{J_1+J_2=[1\dots |J|]} pack(pack(^{w[J]}/_{w[I]}))[J_1]) \right. \notag\\ & \left. \otimes pack(^{pack(^{w[J]}/_{w[I]})[J_2]}/_{pack(^{w[J]}/_{w[I]})[J_1]}))\right) 
 = \sum_{I+J=[1\dots |w|]} pack(w[I])\otimes \left(\sum_{J'_1+J'_2=J} pack(^{w[J'_1]}/_{w[I]}) \right. \notag\\ & \left. \otimes pack(S_{\phi}(^{^{w[J'_2]}/_{w[I]}}/_{{w[J'_1]}/_{w[I]}}))\right)
 = \sum_{I+J=[1\dots |w|]} pack(w[I])\otimes \left(\sum_{J'_1+J'_2=J} pack(^{w[J'_1]}/_{w[I]})\right. \notag\\ & \left. \otimes pack(^{w[J'_2]}/_{w_{[I+J'_1]}})\right)
 = \sum_{I+J'_1+J'_2=[1\dots |w|]} pack(w[I])\otimes pack(^{w[J'_1]}/_{w[I]}) \otimes pack(^{w[J'_2]}/_{w_{[I+J'_1]}}).
\end{align}

Using the two equation \eqref{eq:LHSassc} and \eqref{eq:RHSassc}, one can conclude that the coproduct \eqref{eq:coprod} is coassociative.

Let us now prove the following  \begin{equation}\label{eq:counit}(\epsilon \otimes Id)\circ \Delta(w) = (Id \otimes \epsilon)\circ\Delta(w), \end{equation} for all word $w \in \mathcal{H}$.

Let us rewrite the LHS and the RHS of the equation \eqref{eq:counit}:

\begin{align}
LHS & =(\epsilon \otimes Id)\left(\sum_{I+J=[1\dots |w|]}pack(w[I]) \otimes pack(w[J]/w[I]) \right)\notag\\
& = \sum_{I+J=[1\dots |w|]}\epsilon(pack(w[I])) \otimes pack(w[J]/w[I]) = 1_\H \otimes pack(w) = pack(w).
\end{align}

\begin{align}
RHS & =(Id \otimes \epsilon)\left(\sum_{I+J=[1\dots |w|]}pack(w[I]) \otimes pack(w[J]/w[I]) \right)\notag\\
& = \sum_{I+J=[1\dots |w|]}pack(w[I]) \otimes \epsilon(pack(w[J]/w[I])) = pack(w) \otimes 1_\H = pack(w).
\end{align}

One thus concludes that $(\mathcal{H},\Delta,\epsilon)$ is a c-AAU.
\end{proof}

\ss
\begin{remark}
This coalgebra is not cocommutative, for example: 
\begin{align*}
T_{12} \circ \Delta (x_1^2) & = T_{12} (x_1^2 \otimes 1_\H + 2 x_1 \otimes x_0 + 1_\H \otimes x_1^2) \\ & = x_1^2 \otimes 1_\H + 2 x_0 \otimes x_1 + 1_\H \otimes x_1^2 \neq \Delta(x_1^2),
\end{align*}
where the operator $T_{12}$ is given by $T_{12}(u\otimes v) = v \otimes u$.
\end{remark}

\begin{lemma}
Let $u,v$ be two words. Let $I_1 +J_1 = [1\dots |u|]$ and $I_2 + J_2 = [|u|+1\dots |u|+|v|]$. One then has
\begin{equation}\label{eq:mor2}
pack(^{u*v[J_1+J_2]}/_{u*v[I_1+I_2]}) = pack(^{u[J_1]}/_{u[I_1]}) * pack(^{v[J'_2]}/_{v[I'_2]}),
\end{equation} where $I'_2$ is the set $\{k-|u|,k\in I_2\}$ and $J'_2$ is the set $\{k-|u|,k\in J_2\}$.
\end{lemma}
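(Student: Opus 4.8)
The plan is to establish \eqref{eq:mor2} in exactly the same spirit as Lemma~\ref{lm:subword}: it is the compatibility of the \emph{quotient} part of the coproduct with the shifted concatenation, and together with Lemma~\ref{lm:subword} it will give the full multiplicativity $\Delta(u*v)=\Delta(u)*\Delta(v)$. First I would unfold the left-hand side using $u*v=uT_{sup(u)}(v)$. Since substitution operators act letter by letter, they commute with the extraction of sub-words; and because $I_1,J_1\subseteq[1\dots|u|]$ while $I_2,J_2\subseteq[|u|+1\dots|u|+|v|]$, this yields the block decompositions $u*v[J_1+J_2]=u[J_1]\,T_{sup(u)}(v[J'_2])$ and $u*v[I_1+I_2]=u[I_1]\,T_{sup(u)}(v[I'_2])$, the shifts being passed inside via $J'_2,I'_2$.

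The key step is to factor the quotient across these two blocks. I would observe that the nonzero letters of $u[J_1]$ and $u[I_1]$ carry indices in $[1\dots sup(u)]$, whereas the nonzero letters of $T_{sup(u)}(v[J'_2])$ and $T_{sup(u)}(v[I'_2])$ carry indices $>sup(u)$; moreover the special letter $x_0$ is fixed by every quotient map $\phi_A$. Consequently, dividing $u*v[J_1+J_2]$ by the alphabet of $u*v[I_1+I_2]$ decouples, and the quotient splits as $ ^{u*v[J_1+J_2]}/_{u*v[I_1+I_2]} = (^{u[J_1]}/_{u[I_1]})\,T_{sup(u)}(^{v[J'_2]}/_{v[I'_2]})$, where for the second block I move the shift through the quotient by invoking Lemma~\ref{lm:coprod2}, equation~\eqref{eq:quotionpack}, applied to the strictly increasing map $\phi=T_{sup(u)}$.

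It then remains to apply $pack$ to $a\,T_{sup(u)}(b)$, writing $a=\,^{u[J_1]}/_{u[I_1]}$ and $b=\,^{v[J'_2]}/_{v[I'_2]}$. Here $sup(a)\le sup(u[J_1])\le sup(u)$, so the shift $sup(u)$ is in general strictly larger than the minimal shift $sup(a)$ that the product $*$ uses; this mismatch is the main obstacle, since $a\,T_{sup(u)}(b)$ is not literally $a*b$. I would resolve it by factoring $T_{sup(u)}=T_{sup(a)}\circ T_{sup(u)-sup(a)}$, so that $a\,T_{sup(u)}(b)=a*T_{sup(u)-sup(a)}(b)$; the morphism property of $pack$ (Lemma~\ref{lm:subword} with full index sets) then gives $pack(a*T_{sup(u)-sup(a)}(b))=pack(a)*pack(T_{sup(u)-sup(a)}(b))$, and Lemma~\ref{lm:coprod2}, part~1, applied to the strictly increasing map $T_{sup(u)-sup(a)}$, simplifies $pack(T_{sup(u)-sup(a)}(b))$ to $pack(b)$. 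Chaining the three steps delivers \eqref{eq:mor2}. The only genuinely delicate points are the index-separation argument that decouples the two blocks of the quotient (in particular the invariance of $x_0$) and the reduction of the non-minimal shift, both of which are dispatched by the two parts of Lemma~\ref{lm:coprod2}.
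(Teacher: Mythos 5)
Your proposal is correct and follows essentially the same route as the paper's proof: decompose the sub-word extraction across the two blocks of $u*v$, split the quotient substitution $S_{\phi_{I_1+I_2}}$ into the two block quotients by index separation, commute the shift through the quotient via Lemma~\ref{lm:coprod2}, and finish with the morphism property of $pack$ from Lemma~\ref{lm:subword}. In fact you are more careful than the paper at one point: your explicit factorization $T_{sup(u)}=T_{sup(a)}\circ T_{sup(u)-sup(a)}$ together with Lemma~\ref{lm:coprod2}, part~1, justifies the passage from the shift by $sup(u)$ to the shift by $sup\bigl({}^{u[J_1]}/_{u[I_1]}\bigr)$, a step the paper's computation performs silently.
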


\begin{proof}
One has:
\begin{align}
& pack(^{u*v[J_1+J_2]}/_{u*v[I_1+I_2]}) = pack(S_{\phi_{I_1+I_2}}(u*v[J_1+J_2])) = pack(S_{\phi_{I_1+I_2}}(uT_{sup(u)}(v)[J_1+J_2])) \notag\\
 =& pack(S_{\phi_{I_1}+\phi_{I_2}}(u[J_1]T_{sup(u)}(v)[J_2]))= pack(S_{\phi_{I_1}}S_{\phi_{I_2}}(u[J_1]T_{sup(u)}(v[J'_2]))) \notag\\
 =& pack(S_{\phi_{I_1}}(u[J_1])S_{\phi_{I_2}}(T_{sup(u)}(v[J'_2])))= pack(^{u[J_1]}/_{u[I_1]})T_{sup(^{u[J_1]}/_{u[I_1]})}pack(S_{\phi_{I'_2}}(v[J'_2])) \notag\\
 =& pack((^{u[J_1]}/_{u[I_1]})*pack(^{u[J'_2]}/_{u[I'_2]}).
\end{align}
\end{proof}

\ss
\begin{proposition}
Let $u,v$ be two words in $\mathcal{H}$. One has: \begin{equation} \Delta(u*v) = \Delta(u)*^{\otimes 2} \Delta(v).\end{equation}
\end{proposition}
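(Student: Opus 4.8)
The plan is to expand both sides of the claimed identity directly from the definition of $\Delta$ and to match them term by term, the matching being provided precisely by the two morphism lemmas \eqref{eq:mor1} and \eqref{eq:mor2}. Recall that $*^{\ot 2}$ denotes the induced product on $\H \ot \H$, namely $(a \ot b) *^{\ot 2} (c \ot d) = (a*c) \ot (b*d)$, which is bilinear and therefore commutes with finite sums.

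First I would expand the left-hand side. Writing $w = u*v$ so that $|w| = |u|+|v|$, the definition \eqref{eq:coprod} gives
\[
\Delta(u*v) = \sum_{K+L = [1\dots |u|+|v|]} pack((u*v)[K]) \ot pack(^{(u*v)[L]}/_{(u*v)[K]}).
\]
The key structural remark is that every partition $K+L$ of $[1\dots|u|+|v|]$ splits uniquely along the two blocks $[1\dots|u|]$ and $[|u|+1\dots|u|+|v|]$: setting $K_1 = K \cap [1\dots|u|]$, $K_2 = K \cap [|u|+1\dots|u|+|v|]$, and likewise $L_1,L_2$, one has $K_1 + L_1 = [1\dots|u|]$ and $K_2 + L_2 = [|u|+1\dots|u|+|v|]$. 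This assignment is a bijection onto the product of the two partition sets, so the single sum over $K+L$ factors into a double sum over $(K_1+L_1)$ and $(K_2+L_2)$.

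Next I would rewrite each tensor factor. Lemma \ref{lm:subword}, equation \eqref{eq:mor1}, applied to the first factor with $I=K_1$, $J=K_2$, yields $pack((u*v)[K]) = pack(u[K_1]) * pack(v[K'_2])$, where $K'_2 = \{k-|u|\}_{k\in K_2}$. The preceding lemma, equation \eqref{eq:mor2}, applied to the second factor with $I_1=K_1$, $J_1=L_1$, $I_2=K_2$, $J_2=L_2$, yields $pack(^{(u*v)[L]}/_{(u*v)[K]}) = pack(^{u[L_1]}/_{u[K_1]}) * pack(^{v[L'_2]}/_{v[K'_2]})$. Substituting both and regrouping via the definition of $*^{\ot 2}$, the left-hand side becomes
\[
\sum_{K_1+L_1=[1\dots|u|]}\ \sum_{K_2+L_2=[|u|+1\dots|u|+|v|]} \big(pack(u[K_1]) \ot pack(^{u[L_1]}/_{u[K_1]})\big) *^{\ot 2} \big(pack(v[K'_2]) \ot pack(^{v[L'_2]}/_{v[K'_2]})\big).
\]

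Finally I would recognize this expression as $\Delta(u) *^{\ot 2} \Delta(v)$. After the shift $k \mapsto k-|u|$, which carries partitions of $[|u|+1\dots|u|+|v|]$ bijectively onto partitions $K'_2 + L'_2 = [1\dots|v|]$, the inner sum is exactly $\Delta(v)$, while the outer sum is exactly $\Delta(u)$. Since $*^{\ot 2}$ is bilinear it pulls out of the two finite sums, giving $\big(\sum_{K_1+L_1}\cdots\big) *^{\ot 2} \big(\sum_{K'_2+L'_2}\cdots\big) = \Delta(u)*^{\ot 2}\Delta(v)$, as claimed. I expect the only genuinely delicate point to be bookkeeping rather than substance: one must verify that the index shift $k \mapsto k-|u|$ matches the convention under which $\Delta(v)$ is indexed by subsets of $[1\dots|v|]$, and that the splitting $K+L \mapsto (K_1+L_1,K_2+L_2)$ is indeed a bijection onto the product of partition sets. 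Both are immediate, but it is exactly these two facts that license the term-by-term identification and hence the whole argument.
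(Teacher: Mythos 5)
Your proposal is correct and takes essentially the same route as the paper's own proof: both expand $\Delta(u*v)$, split each partition of $[1\dots |u|+|v|]$ along the blocks $[1\dots |u|]$ and $[|u|+1\dots |u|+|v|]$, apply the two morphism lemmas \eqref{eq:mor1} and \eqref{eq:mor2} to the first and second tensor factors respectively, and then factor the resulting double sum into $\Delta(u)*^{\otimes 2}\Delta(v)$. If anything, you spell out two points the paper leaves implicit, namely that the splitting $K+L \mapsto (K_1+L_1, K_2+L_2)$ is a bijection onto the product of partition sets and that the shift $k \mapsto k-|u|$ reindexes the second sum as $\Delta(v)$.
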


\begin{proof}

One has:

 \begin{align}
& \Delta(u*v)  
= \sum_{\substack{I_1 + I_2 = I \\ J_1 + J_2 = J \\I_1,J_1 \subset [1\dots |u|]\\I_2,J_2 \subset [|u|+1\dots |u|+|v|]}} \left(pack(u * v[I_1+I_2])\right)  \otimes\left( pack(^{(u*v[J_1+J_2]}/_{(u*v[I_1+I_2]})\right)\notag\\
=&\sum_{\substack{I_1+J_1 = [1\dots |u|]\\I'_2+J'_2 = [1\dots |v|]}} \left(pack(u[I_1])\otimes pack(^{u[J_1]}/_{u[I_1]})\right) *\left(pack( v[I_2]) \otimes pack(^{u[J'_2]}/_{u[I'_2]})\right)\notag\\
=&\left(\sum_{I_1+J_1 = [1\dots |u|]} pack(u[I_1])\otimes pack(^{u[J_1]}/_{u[I_1]})\right)  *\left(\sum_{I'_2+J'_2 = [1\dots |v|]} pack( v[I_2]) \otimes pack(^{u[J'_2]}/_{u[I'_2]})\right)\notag\\ 
=& \Delta(u) *^{\otimes 2} \Delta(v).
\end{align}
\end{proof}

\medskip
Since $\mathcal{H}$ is graded by the word's length, one has the following theorem:

\begin{theorem}
$(\mathcal{H},*,1_\H,\Delta,\epsilon)$ is a Hopf algebra.
\end{theorem}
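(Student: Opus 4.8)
The plan is to invoke the standard structural fact that every connected graded bialgebra is automatically a Hopf algebra, so that the work reduces to three points: confirming that $\mathcal{H}$ is a bialgebra, checking that the length grading makes it connected, and producing the antipode by induction on the length. Almost everything needed is already in hand; only the bookkeeping around the unit and the shape of the reduced coproduct must be spelled out.

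First I would assemble the bialgebra axioms. Associativity and the unit are the first proposition, coassociativity and the counit are the c-AAU proposition, and the compatibility $\Delta(u*v)=\Delta(u)*^{\otimes 2}\Delta(v)$ is the last proposition, so $\Delta$ is an algebra morphism. The residual unit/counit compatibilities are immediate: the coproduct formula gives $\Delta(1_\H)=1_\H\otimes 1_\H$; $\epsilon(1_\H)=1$ by definition; and since $|u*v|=|u|+|v|$ one has $u*v=1_\H$ iff $u=v=1_\H$, so $\epsilon$ is multiplicative. Combined with the stated length grading, this exhibits $\mathcal{H}$ as a graded bialgebra, and since both the packing and the quotient operators preserve length, $\Delta$ is a graded map, i.e. $\Delta(\mathcal{H}_n)\subseteq\bigoplus_{p+q=n}\mathcal{H}_p\otimes\mathcal{H}_q$.

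Next I would verify connectedness. The only word of length $0$ is the empty word, so $\mathcal{H}_0=k\,1_\H$ is one-dimensional. Isolating in the coproduct formula the extreme terms $I=[1\dots|w|]$ and $I=\emptyset$ — which yield $w\otimes 1_\H$ and $1_\H\otimes w$ respectively, since packing fixes the packed word $w$ and the quotient by the empty alphabet is trivial — one obtains, for any packed word $w$ with $|w|=n\geq 1$,
\[
\Delta(w)=w\otimes 1_\H+1_\H\otimes w+\sum w'\otimes w'',
\]
where every remaining term satisfies $1\leq|w'|,|w''|\leq n-1$ and $|w'|+|w''|=n$. Getting this reduced form exactly right is the one substantive point of the argument.

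Finally, with this reduced coproduct I would set $S(1_\H)=1_\H$ and, for $|w|=n\geq 1$,
\[
S(w)=-\,w-\sum S(w')\,w'',
\]
the sum running over the middle terms above; this is well defined by induction since each $|w'|<n$, and by construction it realises $\mu\circ(S\otimes Id)\circ\Delta=\eta\circ\epsilon$, where $\eta:k\to\mathcal{H}$ is the unit. The remaining check, that this left convolution inverse of $Id$ is also a right inverse, is the routine connected-graded argument: in the convolution algebra $(\mathrm{End}(\mathcal{H}),\star,\eta\circ\epsilon)$ the operator $Id-\eta\circ\epsilon$ annihilates $\mathcal{H}_0$ and is graded, so in each convolution power $(Id-\eta\circ\epsilon)^{\star k}$ every one of the $k$ tensor slots must carry strictly positive degree; hence $(Id-\eta\circ\epsilon)^{\star k}$ vanishes on $\mathcal{H}_n$ once $k>n$. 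This local nilpotency makes $Id$ convolution-invertible, with two-sided inverse
\[
S=\sum_{k\geq 0}(-1)^k\,(Id-\eta\circ\epsilon)^{\star k},
\]
which agrees with the recursion above. Thus the expected difficulty is not a genuine obstacle but only careful verification of the reduced coproduct; the antipode then comes for free.
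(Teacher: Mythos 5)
Your proposal is correct and follows essentially the same route as the paper: the paper likewise deduces the theorem from the previously established bialgebra axioms together with the grading by word length (hence connectedness, since $\mathcal{H}_0=k\,1_\H$), and states exactly your antipode recursion $S(w)=-w-\sum_{I+J=[1\dots|w|],\,I,J\neq\emptyset}S(pack(w[I]))*pack({}^{w[J]}/_{w[I]})$ immediately after the theorem. Your write-up merely makes explicit the reduced-coproduct extraction and the local-nilpotency convolution argument that the paper leaves implicit in its one-line proof.
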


\begin{proof}

The proof follows from the above results.
\end{proof}

\medskip
For $w\neq 1_\H$, the antipode is given by the recursion: \begin{equation}
S(w) = -w - \sum_{I+J =[1\dots |w|], I,J\neq \emptyset} S(pack(w[I]))*pack(^{w[J]}/_{w[I]}).
\end{equation}



\section{Hilbert series of the Hopf algebra $\wmat$}

In this section, we compute the number of packed words with length $n$ and supremum $k$. It is the same as the number of cyclically ordered partitions of an $n$-element set. Using the formula of Stirling numbers of the second kind (see \cite{Co74}), one can get the explicit formula for the number of packed words with length $n$, number which we denote by $d_n$.  

\begin{definition} The Stirling numbers of the second kind count the number of set partitions of an $n$-element set into precisely $k$ non-void parts. The Stirling numbers, denoted by $S(n,k)$ are given by the recursive definition:

\begin{enumerate}
\item $S(n,n) =1 (n \geq 0)$,
\item $S(n,0) =0 (n > 0)$,
\item $S(n+1,k) = S(n,k-1) + kS(n,k)$, for $0<k\leq n$.
\end{enumerate}
\end{definition}

One can define a word without $x_0$ by its positions, 
this means that if a word $w=x_{i_1}x_{i_2}\dots x_{i_n}$ has length $n$ and 
alphabet $IAlph(w) = \{1,2,\dots,k\}$, then this word can be determine from the list 
$[S_1,S_2,\dots,S_k]$, where $S_i$ is the set of positions of $x_i$ in the word $w$, with $1 \leq i \leq k$. It is straightforward to check that $(S_i)_{0\leq i \leq k}$ is a partition of $[1 \dots n]$.

One can divide the set of packed words with length $n$ and supremum $k$ in two parts: \textit{``pure"} packed words (which have no $x_0$ in their alphabet), denote $pack_{n,s}^+(X)$ and packed words which have $x_0$ in their alphabet, denote $pack_{n,k}^0(X)$. It is clear that: \begin{equation}
d(n,k) = \# pack_{n,k}^+(X) + \# pack_{n,k}^0(X).
\end{equation}

Let us now compute the cardinal of these two sets $pack_{n,k}^+(X)$ and $pack_{n,k}^0(X)$.

Consider a word $w \in pack_{n,k}^+(X)$, then $IAlph(w) = \{1,2,\dots,k\}$. This word is determined by $[S_1,S_2,\dots,S_k]$, in which $S_i$ is a set of positions of $x_i$, for $1\leq i \leq k$. One can see that:\begin{enumerate}
\item $S_i \neq \emptyset$, $\forall i \in [1,k]$;
\item $\sqcup_{1 \leq i \leq k} S_i = \{1,2,\dots,n\}$.
\end{enumerate}

Note that 1-2 hold even with $w = 1_\H$.

Thus, one has the cardinal of packed words with length $n$ and supremum $k$:
\begin{equation}\label{eq:d+}
d^+(n,k)=\# pack_{n,k}^+(X) = S(n,k)k!.
\end{equation}

Similarly, a 
word $w \in \# pack_{n,k}^0(X)$ can be determined by $[S_0,S_1,S_2,\dots,S_k]$ where $S_i$ is the set of positions of $x_i$, for all $0\leq i \leq k$. 
One then has: 
\begin{equation}\label{eq:d0}
d^0(n,k)=\# pack_{n,k}^0(X) = S(n,k+1)(k+1)!.
\end{equation}

From the two equations above, one can get the number of packed words with length $n$, supremum $k$:
\begin{equation}\label{eq:dnk1}
d(n,k) = d^+(n,k) + d^0(n,k) = S(n,k)k! + S(n,k+1)(k+1)! = S(n+1,k+1)k!.
\end{equation}

From this formula, using Maple, one can get some values of $d(n,k)$. We give in the Table \ref{tab:dnk} the first values.
\begin{table}[!ht]
\begin{center}
\begin{tabular}{ccrrrrrrrrr}\hline
 &&\multicolumn{9}{c}{k}\\
&&0&1&2&3&4&5&6&7&8\\\cline{1-11}
\multicolumn{1}{l}{\multirow{9}{*}{n}}& \multicolumn{1}{l}{0}&1&0&0&0&0&0&0&0&0\\
\multicolumn{1}{l}{}&\multicolumn{1}{l}{1}&1&1&0&0&0&0&0&0&0\\
\multicolumn{1}{l}{}&\multicolumn{1}{l}{2}&1&3&2&0&0&0&0&0&0\\
\multicolumn{1}{l}{}&\multicolumn{1}{l}{3}&1&7&12&6&0&0&0&0&0\\
\multicolumn{1}{l}{}&\multicolumn{1}{l}{4}&1&15&50&60&24&0&0&0&0\\
\multicolumn{1}{l}{}&\multicolumn{1}{l}{5}&1&31&180&390&360&120&0&0&0\\
\multicolumn{1}{l}{}&\multicolumn{1}{l}{6}&1&63&602&2100&3360&2520&720&0&0\\
\multicolumn{1}{l}{}&\multicolumn{1}{l}{7}&1&127&1932&10206&25200&31920&20160&5040&0\\
\multicolumn{1}{l}{}&\multicolumn{1}{l}{8}&1&255&6050&46620&166824&317520&332640&181440&40320\\\cline{1-11}
\end{tabular}
\caption{Values of $d(n,k)$ given by the explicit formula \eqref{eq:dnk1} and computed with Maple.\label{tab:dnk}}
\end{center}
\end{table}

Note the values of  Table \ref{tab:dnk} correspond to those of the triangular array $A028246$ of Sloane \cite{sloane}.

\begin{remark}
 Formulas \eqref{eq:d+} and \eqref{eq:d0} imply that the 
 packed words of length $n$ and supremum $k$ without, and respectively with, $x_0$ 
 are in bijection with the ordered partitions of $[n]$ in $k$ parts and respectively in $k+1$ parts. Therefore formula \eqref{eq:dnk1} implies that the set of packed words of length $n$ with supremum $k$ is in bijection with the circularly ordered partitions of $n+1$ elements in $k+1$ parts.
 \end{remark}

\ss
The formula for the number of packed words of length $n$, $d_n$ ($n\geq 1$), is then given by \begin{align}\label{eq:dn}
&d_n =\sum_{k=0}^{n} d(n,k)= \sum_{k=0}^n S(n+1,k+1)k!.
\end{align}

Using again Maple, one can get the values listed in Table \ref{tab:dn}.
\begin{table}[!ht]
\begin{center}
\begin{tabular}{crrrrrrrrrrrr|}\hline
n&0&1&2&3&4&5&6&7&8&9&10\\\hline
$d_n$&1&2&6&26&150&1082&9366&94586&1091670&14174522&204495126\\\hline
\end{tabular}
\caption{Some value of $d_n$ by the formula \eqref{eq:dn}.\label{tab:dn}}
\end{center}
\end{table}

The number of packed words is the sequence 
$A000629$ of Sloane
\cite{sloane}, where it is also mentioned that this sequence corresponds to 
the ordered Bell numbers sequence times two (except for the $0$th order term).

The ordinary and exponential generating function of our sequence are also given in \cite{sloane}. 
The ordinary one is given by 
the formula: $\sum_{n\ge 0} \frac{2^n n! x^n}{\prod_{k=0}^n (1+k x)}$. 
The exponential one 
is given by: $\frac{e^x}{2-e^x}$. Let us give the proof of this.

\ss
Firstly, recall that 
the exponential generating function of the ordered Bell numbers 
(see, for example, page $109$ of Philippe Flajolet's book \cite{flaj}) is:

\begin{equation}\label{eq:pt2}
\frac{1}{2-e^x} = \sum_{n\geq 0 } \sum_{k = 0}^n S(n,k)k! \frac{x^n}{n!}.
\end{equation}

By deriving both side of equation \eqref{eq:pt2} with respect to $x$ , one obtains: 
\begin{equation}\label{eq:pt3}
\frac{e^x}{2-e^x} = \sum_{n\geq 1 } \sum_{k = 1}^n S(n,k)k! \frac{x^{n-1}}{(n-1)!}.
\end{equation}

From equations \eqref{eq:dn} and \eqref{eq:pt3}, one gets the exponential generating function of our sequence:

\begin{equation}\label{eq:egfdn}
\frac{e^x}{2-e^x} = \sum_{n\geq 0 } \sum_{k = 0}^n S(n+1,k+1)k! \frac{x^{n}}{n!} = \sum_{n\geq 0 } d_n \frac{x^{n}}{n!}.
\end{equation}

\bigskip
Let us now investigate the combinatorics of irreducible packed words (see Definition \ref{def:irrword}).
Firstly, we notice that one still has an infinity of irreducible packed words of weight $m$, which are again
obtained by adding multiple copies of the letter $x_0$.

\begin{example}
The word $x_1x_0^kx_1x_0^kx_1$ (with $k$ an arbitrary integer) 
is an irreducible packed word of weight $3$.
\end{example}

Let us denote by $i_n$ the number of irreducible packed words of length $n$. Then one has:
\begin{equation}
i_n =  \sum_{\substack{j_1+ \dots + j_k =n\\j_l \neq 0}}(-1)^{k+1} d_{j_1} \dots d_{j_k}.
\end{equation}

Using Maple, one can get the values of $i_n$, which we give  in Table \ref{tab:irr} below.

\begin{table}[!ht]
\begin{center}
\begin{tabular}{crrrrrrrrrrrr}\hline
n&0&1&2&3&4&5&6&7&8&9&10\\\hline
$i_n$ &1 &2 &2 &10 &66 &538 &5170 &59906 &704226 &9671930 &145992338 \\\hline
\end{tabular}
\caption{Ten first values of the number of irreducible packed words.\label{tab:irr}}
\end{center}
\end{table}

Note that this sequence does not appear in Sloane's On-Line Encyclopedia of Integer Sequences \cite{sloane}.

\section{Primitive elements of $\wmat$}

Let us emphasize that this Hopf algebra, although graded, is not cocommutative and thus the primitive
elements do not generate the whole algebra but only the sub Hopf algebra on which $\Delta$ is cocommutative (the greatest subalgebra on which CQMM theorem holds).

We denote by $Prim(\wmat)$ the algebra generated by the primitive elements of $\H$.

\ss
Let us recall the following result:
\begin{lemma}\label{lm:graded}
Let $V^{(1)}$ and $V^{(2)}$ be two graded vector space.
\begin{equation}
V^{(i)} = \oplus_{n\geq 0} V_n^{(i)} ,\ i = 1, 2.
\end{equation}                                           
Let $\phi \in  Hom^{gr} (V^{(1)} , V^{(2)} )$, that means $(\forall n \geq 0)(\phi (V^{(1)}_n ) \subseteq V_n^{(2)})$.
Then, $Ker(\phi)$ is graded.
\end{lemma}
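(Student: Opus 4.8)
The plan is to show directly that $Ker(\phi)$ splits as the direct sum of its homogeneous pieces, namely
\begin{equation}
Ker(\phi) = \bigoplus_{n\geq 0}\bigl(Ker(\phi)\cap V^{(1)}_n\bigr),
\end{equation}
which is precisely the assertion that $Ker(\phi)$ is a graded subspace of $V^{(1)}$; establishing this equality of subspaces finishes the proof. The inclusion $\supseteq$ is immediate: each $Ker(\phi)\cap V^{(1)}_n$ sits inside $Ker(\phi)$, and $Ker(\phi)$ being a linear subspace, it contains the sum of all of them.

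The content lies in the inclusion $\subseteq$. I would take an arbitrary $v\in Ker(\phi)$ and use the grading of $V^{(1)}$ to write it uniquely as a finite sum $v=\sum_n v_n$ with $v_n\in V^{(1)}_n$. Applying $\phi$ and invoking the degree-preserving hypothesis $\phi(V^{(1)}_n)\subseteq V^{(2)}_n$ gives
\begin{equation}
0=\phi(v)=\sum_n \phi(v_n),\qquad \phi(v_n)\in V^{(2)}_n .
\end{equation}
The single point that carries the argument is that the target sum $V^{(2)}=\bigoplus_{n\geq 0}V^{(2)}_n$ is \emph{direct}, so the homogeneous components of $\phi(v)$ vanish separately, i.e. $\phi(v_n)=0$ for every $n$. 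Hence each $v_n$ belongs to $Ker(\phi)\cap V^{(1)}_n$, and therefore $v\in\bigoplus_{n\geq 0}\bigl(Ker(\phi)\cap V^{(1)}_n\bigr)$, which is the desired inclusion.

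I do not anticipate a genuine obstacle here: the statement is a routine consequence of the uniqueness of the homogeneous decomposition in a graded vector space. The only hypotheses actually used are the degree-preserving property of $\phi$ and the directness of the target grading, while the finiteness of the support of $v=\sum_n v_n$ comes for free from the direct-sum definition of $V^{(1)}$, so no convergence or finiteness issue needs separate handling.
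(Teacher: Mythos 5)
Your proof is correct and complete: decomposing $v\in Ker(\phi)$ into homogeneous components, applying $\phi$, and using the directness of $V^{(2)}=\oplus_{n\geq 0}V^{(2)}_n$ to kill each $\phi(v_n)$ separately is exactly the standard argument. The paper states this lemma as a recalled result and gives no proof at all, so your write-up simply supplies the routine verification the authors took for granted; there is nothing to compare it against and no gap to report.
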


\medskip
One then has:
\begin{proposition}
$Prim(\wmat)$ is a Lie subalgebra of $\wmat$, graded by the word's length.
\end{proposition}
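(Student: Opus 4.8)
The plan is to establish the two assertions of the statement — closure under the Lie bracket and gradedness — at the level of the space of primitive elements
\[
P = \{\, x \in \H : \Delta(x) = x \otimes 1_\H + 1_\H \otimes x \,\},
\]
and then to transfer them to the subalgebra $Prim(\wmat)$ generated by $P$. First I would observe that $P$ is a linear subspace of $\H$ (immediate from the linearity of $\Delta$), so it suffices to verify that $P$ is stable under the commutator $[x,y] = x*y - y*x$ and that it is a graded subspace.

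For the Lie part, I would run the standard bialgebra computation, whose only input is that $\Delta$ is an algebra morphism, i.e. the earlier Proposition $\Delta(u*v) = \Delta(u)*^{\otimes 2}\Delta(v)$. For $x,y \in P$ this gives
\[
\Delta(x*y) = \big(x\otimes 1_\H + 1_\H\otimes x\big)*^{\otimes 2}\big(y\otimes 1_\H + 1_\H\otimes y\big) = (x*y)\otimes 1_\H + x\otimes y + y\otimes x + 1_\H\otimes(x*y),
\]
and an analogous expression for $\Delta(y*x)$. Subtracting, the mixed terms $x\otimes y + y\otimes x$ cancel and one is left with $\Delta([x,y]) = [x,y]\otimes 1_\H + 1_\H\otimes[x,y]$, so $[x,y]\in P$. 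Hence $P$ is a Lie subalgebra, and the associative subalgebra $Prim(\wmat)$ it generates is a fortiori closed under the commutator.

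For the gradedness, I would exhibit $P$ as the kernel of a graded linear map and invoke Lemma \ref{lm:graded}. Set
\[
\psi : \H \longrightarrow \H\otimes\H,\qquad \psi(w) = \Delta(w) - w\otimes 1_\H - 1_\H\otimes w,
\]
so that $P = \ker\psi$. Grading $\H\otimes\H$ by total length, $(\H\otimes\H)_n = \bigoplus_{p+q=n}\H_p\otimes\H_q$, I would check that $\psi$ is homogeneous of degree $0$: in each term $pack(w[I])\otimes pack(^{w[J]}/_{w[I]})$ of $\Delta(w)$ the two factors have lengths $|I|$ and $|J|$, since both the packing operator and the quotient $/$ preserve the length of a word, and $|I|+|J| = |w|$; likewise $w\otimes 1_\H$ and $1_\H\otimes w$ sit in bidegrees $(|w|,0)$ and $(0,|w|)$. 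Thus $\psi(\H_n)\subseteq(\H\otimes\H)_n$, and Lemma \ref{lm:graded} yields that $P = \ker\psi$ is graded.

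Finally, to pass from $P$ to $Prim(\wmat)$ I would use that $\H$ is a graded algebra for the length, since $|u*v| = |u|+|v|$: the subalgebra generated by the graded subspace $P$ is spanned by products $p_1*\cdots*p_k$ of homogeneous primitives, each of which is homogeneous, so $Prim(\wmat)$ is a sum of homogeneous components, hence graded. The argument is essentially bookkeeping; the single point that must be gotten right is the homogeneity of $\psi$, which rests entirely on the fact that neither packing nor the operation $/$ changes the length of a word. I do not anticipate a genuine obstacle beyond this verification.
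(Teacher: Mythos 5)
Your proof is correct and follows essentially the same route as the paper: you realize the primitive elements as the kernel of the graded map $w \mapsto \Delta(w) - w\otimes 1_\H - 1_\H\otimes w$ and invoke Lemma \ref{lm:graded}, which is exactly the paper's argument with its map $\Delta^+$. You additionally spell out the standard bialgebra computation for closure under the commutator (which the paper leaves implicit) and treat the value at $1_\H$ and the passage to the generated subalgebra more carefully, but these are refinements of, not departures from, the paper's proof.
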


\begin{proof}
Let us define the mapping \begin{eqnarray}
\Delta^+:  & \wmat  \longrightarrow \wmat \otimes \wmat \notag\\
& \begin{cases} 1_\H & \longmapsto 0 \\ h & \longmapsto \Delta(h) - 1_\H \otimes h -h \otimes 1_\H \end{cases}.
\end{eqnarray}

This mapping is graded. Using Lemma \ref{lm:graded}, one has $Prim(\wmat) = Ker(\Delta^+)$. Thus, the subalgebra $Prim(\wmat)$ is graded.
\end{proof}

\medskip


Let us now compute the dimensions of the first few spaces $Prim(\wmat)_n$:
\begin{itemize}
 \item 
For $n=1$, 
one has a basis formed by the primitive elements $x_0$ and $x_1$. 
Then one can check that the primitive elements of length $1$ have the form $ax_0 + bx_1$, with $a$ 
and $b$ scalars. 

\item
For $n=2$, one has a basis formed by the primitive elements: $x_0 x_1 - x_1 x_0$ and $x_1 x_2 - x_2 x_1$. 
Then one can check that all the primitive elements of the length $2$ 
have the form $a(x_0 x_1 - x_1 x_0 ) + b(x_1 x_2 - x_2 x_1)$, with $a$ and $b$ scalars.
This comes from explicitly solving a system of $4$ equations with $d_2=6$ variables.

\end{itemize}

Nevertheless, the explicit calculations quickly become lengthy. Thus, for $n=3$, 
one has to solve a system of $22$ equations with $26$ variables. 



\bigskip     
Finally, let us give some a posteriori explanations on the choice of the $\wmat$ 
name for our algebraic structure.
We have defined here a Hopf algebra on some set of words with a selection/quotient coproduct rule in the spirit of graphs  \cite{Con-Kre2} and matroids Hopf algebras \cite{schmi} . We could thus call our algebra $\mathrm{WGraph}$ or $\wmat$, but we prefer the name $\wmat$ since matroids are more general structures than graphs.

On the other hand, the "W" in our Hopf algebra name simply means that we have a Hopf algebra with basis indexed by a subset $X$ of the free
monoids $\NN^\ast$. Let us state that, at this point, there is no polynomial realization of $\wmat$ 
(as is is the case for $WSym$ or $WQSym$ or the Connes-Kreimer Hopf algebras on trees (see \cite{pol-rel} and references within).

This actually seems to us to be an important perspective for future work related to the 
new Hopf algebra proposed in this paper.
 
\section*{Acknowledgements}
We acknowledge Jean-Yves Thibon for
various discussions and suggestions.
The authors also acknowledge a Univ. Paris 13, Sorbonne Paris Cit\'e BQR grant.
A. Tanasa further acknowledges
the grants PN 09 37 01 02 and CNCSIS Tinere Echipe 77/04.08.2010.
G. H. E. Duchamp acknowledges the grants ANR BLAN08-2\_332204 (Physique Combinatoire) and PAN-CNRS 177494 (Combinatorial Structures and Probability Amplitudes).



\addcontentsline{toc}{section}{References}
\bibliographystyle{plain}


{\small $^{1}$LIPN - Institut Galil\'ee - Universit\'e Paris 13, Sorbonne Paris Cit\'e, CNRS UMR 7030,} \\
{\small 99 avenue Jean-Baptiste Cl\'ement, F-93430 Villetaneuse, France,\\
$^{2}$Horia Hulubei National Institute for Physics and Nuclear Engineering,\\
P.O.B. MG-6, 077125 Magurele, Romania}

\end{document}